\subjclass[2000]{Primary 46J10; Secondary 46H20}
\keywords{uniform algebras, amenability, regularity}
\newcommand{\C}{\mathbb C}
\newcommand{\N}{\mathbb N}
\newcommand{\eps}{\varepsilon}
\newcommand{\norm}[1]{\Vert #1\Vert}
\newcommand{\abs}[1]{\vert #1 \vert}
\newcommand{\seq}[2]{(#1_{#2})_{#2\in\N}}
\newtheorem{theorem}{Theorem}[section]
\newtheorem{lemma}[theorem]{Lemma}
\theoremstyle{definition}
\newtheorem{Lemma}[theorem]{Lemma}
\theoremstyle{definition}
\newtheorem{prop}[theorem]{Proposition}
\theoremstyle{definition}
\newtheorem{dfn}[theorem]{Definition}
\newtheorem{question}[theorem]{Question}
\def\unorm#1{\|#1\|_{\infty}}
\title{Regularity and amenability conditions for uniform algebras}
\begin{document}
\author{J.F. Feinstein}
\address{School of Mathematical Sciences\\
University of Nottingham\\
Nottingham, NG7~2RD\\
UK.}

\email{Joel.Feinstein@nottingham.ac.uk}

\author{M.J. Heath}
\address{School of Mathematical Sciences\\
University of Nottingham\\
Nottingham, NG7~2RD
\\UK.}

\email{matthew.heath@maths.nottingham.ac.uk}
\thanks{The second author was supported by a grant from the EPSRC}
\maketitle
\begin{abstract}
 We give a survey of the known connections between regularity conditions and amenability conditions in the setting of uniform algebras.
For a uniform algebra $A$ we consider the set, $A_{lc}$, of functions in $A$ which are locally constant
on a (varying) dense open subset of the character space of $A$. We show that, for a separable uniform algebra $A$,
if $A$ has bounded relative units at every point of a dense subset of the character space of $A$, then $A_{lc}$ is dense in $A$. We construct a separable, essential,  regular uniform algebra $A$ on its character space $X$ such that every point of $X$ is a peak point for $A$, $A$ has bounded relative units at every point of a dense open subset of $X$ and yet $A$ is not weakly amenable.
In particular, this shows that a continuous derivation from a separable, essential uniform algebra $A$ to its dual need not annihilate
$A_{lc}$.
\end{abstract}

\section{Introduction}
\noindent Amenability conditions for Banach algebras and regularity conditions for Banach function algebras have been extensively studied. We refer the reader to \cite{Dales} as a reference for a detailed introduction to the majority of the conditions we discuss, their history, examples and applications.

Algebra amenability conditions originally arose out of Kamowitz's cohomology theory for Banach algebras (see
\cite{Kam}). The connection with the amenability of topological groups is given by a famous theorem of B.~E.~Johnson: for a locally compact topological group $G$,
the Banach algebra $L^1(G)$ is amenable if and only if $G$ is an amenable topological group (\cite{Johnson}).
The study of cohomology of Banach algebras continues to be a highly active area, and it is impossible to do it justice in a short survey. The reader may wish to consult \cite[Section 2.8]{Dales} (especially the historical summary beginning on page 304).

Regularity conditions have
important applications in areas such as automatic continuity theory (\cite[Chapter 5]{Dales}), the theory of Wedderburn decompositions 
(\cite{BadeDales}) and the decomposability of multiplication operators (\cite{Neumann}).
For more details of these applications, we refer the reader to \cite[Chapter 4]{Dales}.

\par In this note we give a brief survey, primarily in the setting of uniform algebras, of amenability and regularity conditions and the connections between them.
We also prove some new results concerning functions which are locally constant on a dense open subset of the
character space of a uniform algebra.

\section{Notation and terminology}
\noindent We shall assume that the reader has some familiarity with uniform algebras and with Banach function algebras (commutative, semisimple
Banach algebras). The reader may find the relevant definitions and the basic theory of Banach function algebras, along with numerous
examples, in \cite[Chapter 4]{Dales}.
Throughout, when we use the terms ``Banach function algebra $A$ on $X$'' and ``uniform algebra $A$ on $X$'' it will be assumed that
$X$ is the character space of $A$ and that $A$ is unital. In particular $X$ will always be a compact, Hausdorff space. We denote by  $C(X)$ the  uniform algebra of all continuous, complex-valued functions on $X$, and,
for $f \in C(X)$, we denote the uniform norm of $f$ by $\unorm{f}$.
For a non-empty, compact plane set $X$ we define $R_0(X)$
to be the set of restrictions to $X$ of rational functions with poles off $X$, and $R(X)$ to be the uniform closure of $R_0(X)$ in
$C(X)$.
Whenever $R(X)$ is referred to, $X$ will be assumed to be a non-empty, compact plane set.

We say that a uniform algebra is \emph{trivial} if
it is equal to $C(X)$; otherwise it is \emph{non-trivial}.

We shall frequently refer to the following ideals.
\begin{dfn}
 Let $A$ be a Banach function algebra on $X$ and let $x\in X$. We define ideals $J_x$ and $M_x$ by:
\begin{eqnarray*}
 M_x&:=&\{f\in A:f(x)=0\}\,;\\
J_x&:=&\left\{f\in A: f^{-1}(\{0\}) \textrm{ is a neighbourhood of } x\right\}.
\end{eqnarray*}
\end{dfn}
\section{Survey of conditions and known relationships}
\subsection{Amenability conditions}

\noindent Let $A$ be a Banach algebra and let $E$ be a Banach $A$-bimodule. A \emph{derivation }$D:A\rightarrow E$ is a linear map such that
$$D(ab)=D(a)\cdot b+a\cdot D(b)\quad(a,b\in A).$$
A derivation is: \emph{inner} if it is of the form $D(a)=a\cdot e -e\cdot a$ for some fixed $e\in E$;  \emph{approximately inner}
if it is in the strong operator topology closure of the inner derivations; \emph{pointwise inner} if for each $a\in A$ there is $e\in E$ with $D(a)=a\cdot e -e\cdot a$. Note
that, for a Banach algebra $A$ and a commutative Banach $A$-bimodule $E$, pointwise inner derivations (and therefore inner
derivations) are zero.

 The \emph{dual Banach $A$-bimodule to} $E$ is the Banach space dual $E'$ of $E$, together with the module actions given by
\begin{eqnarray*}
(a\cdot\phi)(e)&:=&\phi(e\cdot a)\,,\\
(\phi\cdot a)(e)&:=& \phi(a\cdot e),
\end{eqnarray*}
for $a\in A$, $e\in E$ and $\phi\in E'$.

\begin{dfn}
A Banach algebra $A$ is said to be: \emph{amenable} if all continuous derivations from $A$ to dual Banach $A$-bimodules are
inner;
\emph{pointwise amenable} if all continuous derivations from $A$ to dual Banach $A$-bimodules are pointwise inner;
\emph{approximately amenable} if all continuous derivations from $A$ to dual Banach $A$-bimodules are approximately inner;
\emph{weakly amenable} if all continuous derivations from $A$ to $A'$ are inner.
\end{dfn}
The term amenability (for Banach algebras) was introduced by B.~E.~Johnson in \cite{Johnson}.
Weak amenability was introduced (for commutative Banach algebras) by Bade \emph{et al.} in \cite{BCD}. Approximate
amenability was introduced by Ghahramani and Loy in \cite{GhahLoy} and pointwise amenability by Dales and Ghahramani in
\cite{DG}.  In \cite{GLZ} Ghahramani, Loy and Zhang showed that if a Banach algebra $A$ is approximately amenable, then all continuous derivations into \emph{any} Banach $A$-bimodule are approximately inner (that is, $A$ is \emph{approximately contractible} in the terminology of \cite{GhahLoy}).

The following was proved in \cite{BCD} and can also be found as \cite[2.8.63 (iii)]{Dales}
\begin{prop}
Let $A$ be a weakly amenable, commutative Banach algebra. Then, for every commutative Banach $A$-bimodule $E$,
there are no non-zero bounded derivations from $A$ into $E$.
\end{prop}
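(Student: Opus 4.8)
The plan is to reduce everything to the defining hypothesis that every continuous derivation from $A$ into $A'$ is inner. Since $A$ is commutative, $A$ is a commutative $A$-bimodule over itself and hence so is its dual $A'$; as noted above, inner derivations into a commutative bimodule are zero, so weak amenability of $A$ says precisely that there are \emph{no} non-zero continuous derivations from $A$ into $A'$. Now fix a commutative Banach $A$-bimodule $E$ and a continuous derivation $D\colon A\to E$; the goal is $D=0$.

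The key step is to manufacture, for each $\phi\in E'$, a continuous derivation $\theta_\phi\colon A\to A'$ out of $D$, by setting
\[
 \theta_\phi(a)(b):=\phi\bigl(b\cdot D(a)\bigr)\qquad(a,b\in A).
\]
A one-line estimate gives $\norm{\theta_\phi(a)}\le\norm{\phi}\,\norm{D}\,\norm{a}$, so $\theta_\phi$ is a bounded linear map into $A'$. To see it is a derivation, apply the derivation identity for $D$ and associativity of the module action to obtain
\[
 \theta_\phi(ac)(b)=\phi\bigl(b\cdot D(ac)\bigr)=\phi\bigl((ba)\cdot D(c)\bigr)+\phi\bigl((bc)\cdot D(a)\bigr),
\]
and compare with $\bigl(a\cdot\theta_\phi(c)+c\cdot\theta_\phi(a)\bigr)(b)$, using that the action on $A'$ is $(a\cdot f)(b)=f(ba)$; the two sides agree. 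By weak amenability $\theta_\phi$ is inner, hence $\theta_\phi=0$. Since $\phi\in E'$ was arbitrary, Hahn--Banach gives $b\cdot D(a)=0$ for all $a,b\in A$.

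Putting this back into the derivation identity, $D(ab)=a\cdot D(b)+b\cdot D(a)=0$ for all $a,b\in A$, so $D$ vanishes on the linear span of products in $A$ and, by continuity, on $\overline{A^2}$. It then remains only to check that $\overline{A^2}=A$, which is where the hypothesis is used a second time: if $\overline{A^2}\neq A$, choose (Hahn--Banach) a non-zero $g\in A'$ vanishing on $\overline{A^2}$, note that then $a\cdot g=0=g\cdot a$ for every $a\in A$, and observe that $\delta(a):=g(a)\,g$ defines a bounded derivation $A\to A'$ that is non-zero (evaluate at any $a$ with $g(a)\neq0$) --- contradicting weak amenability. Hence $\overline{A^2}=A$ and $D=0$.

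The substantive hypothesis thus enters twice, once to annihilate each $\theta_\phi$ and once to force $\overline{A^2}=A$; apart from that the argument is bookkeeping, the only point needing care being the module-action conventions on $A'$ in the derivation check for $\theta_\phi$. (For unital $A$, which covers the uniform algebras of interest here, the last paragraph is unnecessary: once $b\cdot D(a)=0$ for all $b$, taking $b=1$ already gives $D(a)=0$.)
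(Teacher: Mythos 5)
Your proof is correct: the paper itself gives no argument for this proposition (it simply cites \cite{BCD} and \cite[2.8.63]{Dales}), and what you have written is essentially the standard proof found there --- transporting $D$ to derivations $\theta_\phi\colon A\to A'$, killing them by weak amenability, and then using weak amenability a second time to get $\overline{A^2}=A$. The only cosmetic point is that in verifying the derivation identity for $\theta_\phi$ you also use the commutativity of $E$ (to rewrite $b\cdot(D(a)\cdot c)$ as $(bc)\cdot D(a)$), which is worth saying explicitly.
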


Let $\psi$ be a character on $A$. A point derivation at $\psi$ is a linear functional $d$ such that
$$d(ab)=\psi(a)d(b)+\psi(b)d(a),\quad a,b\in A.$$

In the case where $A$ is a uniform algebra we have the following relationships between amenability conditions:

$A=C(X) \Leftrightarrow A$ is amenable $\Leftrightarrow A$ is pointwise amenable $\Rightarrow A$ is approximately amenable $\Rightarrow A$ is weakly amenable $\Rightarrow A$ has
no non-zero bounded point derivations.

The first equivalence is \cite[Theorem 5.6.2]{Dales} and is due to She\u{\i}nberg. The second equivalence and the implication which follows it are due to Dales and Ghahramani in \cite{DG}.  The remaining stated implications are elementary.
That the final implication can not be reversed was shown, by the first author, in \cite{FeinsteinMorris}, by constructing a compact plane
set $X$ such that the uniform algebra $R(X)$  has no non-zero, bounded point derivations but is not weakly amenable. That there is an
$X$ such that $R(X)$ is non-trivial and has no non-zero bounded point derivations had previously been proved by Wermer in \cite{Wermer}.
The remaining reverse implications are open for uniform algebras. Moreover, it is open whether or not every weakly amenable uniform algebra must be trivial.

A separate set of conditions concerns the non-existence of (not necessarily bounded) point derivations. It is standard (see, for example,
\cite[p.64]{Browder}, \cite[p.267]{Dales}) that, for a Banach function algebra $A$ on $X$, and $x\in X$, $A$ has no non-zero point
derivations at $x$ if and only if $M_x^2=M_x$; $A$ has no non-zero {\em bounded} point derivations at $x$
if and only if
$\overline {M_x^2}=M_x$. We say that $Y\subseteq X$ is a \emph{peak set for} $A$ if there is $f\in A$ with $f(Y)=\{1\}$ and
$\abs{f(x)}<1$, for all $x\in X\setminus Y$; $x\in X$ is a \emph{peak point for $A$} if $\{x\}$ is a peak set for $A$; $x\in X$ is a
\emph{$p$-point for $A$} if $\{x\}$ is an intersection of peak sets for $A$.
Clearly if $x$ is a peak point for $A$ it is a $p$-point for $A$. If $X$ is metrizable, then the converse also holds: peak points and $p$-points coincide. Note that (as we are dealing with unital uniform algebras), $X$ is metrizable if and only if $A$ is separable.
By \cite[Theorem 4.3.5]{Dales}, for $x \in X$, $M_x$ has a bounded approximate identity
if and only if $x$ is a $p$-point for $A$; this, in turn, implies that $M_x^2=M_x$
(i.e. there are no non-zero point derivations at $x$), by \cite[Corollary 2.9.30(ii)]{Dales} (part of the Cohen factorisation theorem). 

The algebras $R(X)$ give a good source of examples in this area. In many cases, stronger examples can then be constructed by applying a suitable system of Cole root extensions to such an algebra.
For more details concerning such systems of extensions see, for example, \cite{Cole}, \cite{FeinsteinStronglyRegular}, and \cite{Dawson}.
The first author (\cite{FeinsteinMorris}) used these methods to prove the following result.

\begin{prop} \label{Jpp} There exists a separable uniform algebra $A$ such that every point of the
character space of $A$ is a peak point for $A$, but $A$ is not weakly amenable.
\end{prop}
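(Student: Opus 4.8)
The plan is to combine the first author's example of a non-weakly-amenable $R(X)$ with a transfinite system of Cole root extensions. By the result quoted above (from \cite{FeinsteinMorris}) there is a compact plane set $X$ such that $R(X)$ carries a nonzero bounded derivation $D_0\colon R(X)\to R(X)'$; in particular $R(X)$ is a separable, non-trivial uniform algebra. I would then pass to a uniform algebra $A$ on a metrizable space $Y$, obtained from $R(X)$ by iterating root extensions, so arranged that (i) every point of $Y$ is a peak point for $A$, (ii) $A$ is separable, and (iii) $A$ is still not weakly amenable. Conditions (i) and (ii) are precisely what the metrizable version of Cole's construction delivers when one starts from a separable algebra, as in \cite{Cole}, \cite{FeinsteinStronglyRegular} and \cite{Dawson}; the substantive point is (iii).

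Recall the basic step. Given a uniform algebra $B$ on a compact space $Z$ and $f\in B$, one forms $Z_f=\{(z,\zeta)\in Z\times\C:\zeta^2=f(z)\}$ and lets $B_f$ be the uniform algebra on $Z_f$ generated by $\{g\circ\pi:g\in B\}$ together with the function $w$ given by $w(z,\zeta)=\zeta$, where $\pi\colon Z_f\to Z$ is the branched double cover. Then $\pi^*\colon B\to B_f$ is an isometric unital embedding, the ``averaging'' operator $\pi_*\colon B_f\to\pi^*B$ is a norm-one $B$-bimodule projection, $w^2=\pi^*f$, and $B_f=\pi^*B\oplus\overline{w\,\pi^*B}$ as a Banach $B$-module. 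One then iterates this step along a countable system chosen so that each intermediate algebra stays separable while, in the limit, every point of the character space becomes a peak point and non-triviality is preserved; this is the content of the refinements of Cole's construction cited above, and it yields the desired algebra $A$, on its character space $Y$, satisfying (i) and (ii).

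For (iii) I would propagate the witnessing derivation up the tower. Suppose that a stage $B$ of the construction is not weakly amenable, witnessed by a nonzero bounded derivation $D\colon B\to B'$; I want the same for $B_f$. One cannot simply transport $D$ through $\pi_*$, which is not multiplicative; instead, writing a typical element of $B_f$ as $b_0+w\,b_1$ with $b_0,b_1\in\pi^*B$ and using the module decomposition above, a putative extension $\widetilde D$ with $\widetilde D|_{\pi^*B}$ equal to the induced derivation $(\pi_*)'\circ D$ is forced (by the Leibniz rule applied to $w\cdot w=\pi^*f$) to have $\widetilde D(w)$ satisfy $2\,w\cdot\widetilde D(w)=(\pi_*)'(D(f))$. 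One must therefore solve a single division problem in $B_f'$ and then verify the Leibniz identity on the dense subalgebra of elements $b_0+w\,b_1$. This division is not automatic --- it amounts to controlling $D$ near the zero set of $f$, where $w$ degenerates --- so the Cole system must be chosen in tandem with the construction of $X$ and $D_0$ (for instance so that $D_0$ vanishes on enough functions, or on a suitable dense ideal). Having extended the derivation at each stage with a controlled norm, one would finish by a direct-limit argument, using weak-$*$ compactness of bounded sets in the relevant dual space, to obtain a bounded derivation $D_\infty\colon A\to A'$, together with a separate check that $D_\infty$ is nonzero --- the value of the derivation on some fixed element of $R(X)$ must be seen to survive every extension. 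I expect this last bundle of issues --- arranging that the witnessing derivation survives each root extension, and that both its norm and its non-triviality are preserved through the whole transfinite system --- to be the main obstacle; the rest is bookkeeping layered on top of Cole's construction.
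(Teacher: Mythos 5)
Your plan follows the same route as the paper's source for this result: Proposition \ref{Jpp} is not proved here but quoted from \cite{FeinsteinMorris}, where the argument is exactly a countable system of Cole root extensions applied to the non-weakly-amenable $R(X)$, with the witnessing derivation carried up the tower; the same machinery is displayed in Proposition \ref{thm} and Theorem \ref{bru-notwa}. The difficulty you isolate and defer --- ``arranging that the witnessing derivation survives each root extension'' --- is, however, the whole content of the proof, and as written your proposal does not close it. The resolution is not a division problem in $B_f'$: one arranges that \emph{both} sides of the forced identity $2\,w\cdot\widetilde D(w)=\widetilde D(\pi^*f)$ vanish. The derivation $D(f)(g)=\int_{\partial Q}f'(z)g(z)\,\textrm{d}z$ annihilates, in each variable separately, the ideal of functions vanishing on $\partial Q$ (integrate by parts around the closed contour: $\int_{\partial Q}f'g\,\textrm{d}z=-\int_{\partial Q}fg'\,\textrm{d}z$), and the Cole system adjoins square roots \emph{only} to functions in the corresponding ideals $I(F_\alpha)$, $F_\alpha=\Pi_{0,\alpha}^{-1}(\partial Q)$ --- this is condition (1) of Theorem \ref{bru-notwa}, and the itemised clause of Proposition \ref{thm} that $D(f)$ depends only on $f|_{\partial Q}$ is precisely the statement that $D$ kills this ideal. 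One then sets $\widetilde D(w)=0$ for every adjoined root; the Leibniz identity on elements $b_0+w\,b_1$ holds because the cross term $f\,b_1c_1$ lies in the ideal annihilated by $D$, the bound $\|\widetilde D\|\le\|D\|$ follows from $\|\pi_*\|=1$, and non-vanishing in the limit is automatic because each $(\pi_*)'$ is injective --- no separate ``survival'' check is needed.

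The second gap is your assertion that conditions (i) and (ii) are ``precisely what the metrizable version of Cole's construction delivers''. Once you restrict to adjoining roots only of functions vanishing on $F_\alpha$, Cole's construction no longer makes the points of $F_\omega=\Pi_{0,\omega}^{-1}(\partial Q)$ into peak points for free: no roots are being adjoined ``at'' those points. They must be peak points already at stage $0$ (which they are, since $\partial Q$ is the boundary of the convex set $Q$), and they remain peak points upstairs because every adjoined function vanishes on $F_\alpha$, so the fibres over $F_\alpha$ are singletons and preimages of peak points are one-point peak sets. The points off $F_\omega$ acquire bounded relative units by Lemma \ref{bru}, hence their maximal ideals have bounded approximate identities and they are $p$-points, equivalently peak points in the metrizable setting \cite[Theorem 4.3.5]{Dales}. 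The two competing requirements --- enough roots to make every point a peak point, few enough that the derivation extends --- are reconciled only because the derivation factors through restriction to the nowhere dense set $\partial Q$; this reconciliation, rather than ``bookkeeping layered on top of Cole's construction'', is the substance of the proof.
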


As mentioned above, if $A$ is a separable uniform algebra on $X$, a point
$x\in X$ is a $p$-point for $A$ if and only if it is a peak point for $A$. For $A=R(X)$ these conditions are, further,  equivalent to the condition that $M_x^2 = M_x$ (see \cite[Corollary 3.3.11]{Browder}).
For general uniform algebras, it is not true that $M_x^2=M_x$ implies that $x$ is a $p$-point (see, for example, \cite{Cole,Ouzomgi}). It is open whether or not this implication holds for all separable uniform algebras.

Bishop's theorem (\cite[Theorem 3.3.3]{Browder}) states that
$R(X)=C(X)$ if and only if almost every (with respect to area) point of $X$ is a peak point for $R(X)$. Hence the previously mentioned 
example of Wermer \cite{Wermer} has many (unbounded) point derivations. Thus, even for $R(X)$, having no non-zero bounded point derivations does not
imply the non-existence of non-zero point derivations.

We note that in more general settings the situation is somewhat different. For the relationships between amenability conditions for
general Banach algebras see \cite[Chapter 2.8]{Dales}, \cite{GhahLoy} and \cite{DG}. It is not known if a pointwise amenable, commutative Banach algebra must be amenable. It is shown in \cite{BCD} that a Banach function algebra can
be weakly amenable and not amenable. Since, for a discrete abelian topological group $G$, $L^1(G)$ may be regarded (via the Gelfand transform) as
a Banach function algebra (\cite[4.5.4]{Dales}), \cite{Johnson} provides many examples of Banach function algebras which are amenable
and not equal to $C(X)$. Also, for Banach function algebras, there are simpler examples to show that having no non-zero point derivations
does not imply weak amenability; the well known algebras $AC[0,1]$ and $BVC[0,1]$ will suffice (\cite[Theorems 4.4.35 and 5.6.8]{Dales}).
Examples of Banach function algebras which are approximately amenable but not amenable may be found in
\cite{DLD}. One such example is the semigroup algebra $\ell^1(S)$, where $S$ is
the set of non-negative integers with the semigroup multiplication $m\cdot n = \max\{m,n\}$ $(m, n \in S)$.

\subsection{Regularity conditions}
\subsubsection{Regularity properties at a point}

\begin{dfn}
Let $A$ be a Banach function algebra on $X$. A point $x\in X$ is a \emph{point of continuity for}  $A$ if there is no point $y \in
X\setminus \{x\}$, such that $M_x\supseteq J_y$. We say that $A$ is \emph{strongly regular at} $x$ if $\overline{J_x}=M_x$;  $A$ satisfies
\emph{Ditkin's condition at} $x$ if, for all $f\in M_x$ and all $\eps > 0$, there is $g\in J_x$ with $\norm{gf-f}<\eps$; for $C \geq 1$, $A$
has \emph{bounded relative units at} $x$ \emph{with bound} $C$ if,
for all compact $E\subseteq X\setminus\{x\}$, there is $f\in J_x$ with
$\norm{f}\le C$, such that $f(E)\subseteq \{1\}.$
We say that $A$ \emph{has bounded relative units at} $x$ if there exists $C \geq 1$ such that $A$ has bounded relative units at $x$ with bound $C$.
\end{dfn}
We may  consider the following conditions which a Banach function algebra $A$ on $X$ may satisfy at a point $x\in X$:
\begin{itemize}
 \item [(i)] for all $C>1$, $A$ has bounded relative units at $x$ with bound $C$;
\item[(ii)] $A$ has bounded relative units at $x$;
\item[(iii)] $A$ is strongly regular at $x$ and $M_x$ has a bounded approximate identity;
\item[(iv)] $A$ satisfies Ditkin's condition at $x$;
\item[(v)] $A$ is strongly regular at $x$;
\item[(vi)] $x$ is a point of continuity for $A$.
\end{itemize}
In the case where $A$ is a uniform algebra, the following relationships exist between these conditions:
$$\textrm{(i)} \Leftrightarrow\textrm{(ii)} \Rightarrow \textrm{(iii)} \Rightarrow \textrm{(iv)}\Rightarrow \textrm{(v)}\Rightarrow
\textrm{(vi)}.$$

\subsubsection{Global regularity properties}
\begin{dfn}
 Let $A$ be a Banach function algebra on its character space $X$. We say that: $A$ is \emph{regular} if, for all $x\in X$ and all
compact sets $E\subseteq X\setminus\{x\}$ there exists $f\in M_x$ with $f(E)\subseteq 1$; $A$ is \emph{strongly regular} if it is
strongly regular at each $x\in X$; $A$ is a \emph{Ditkin algebra} if it satisfies Ditkin's condition at each $x\in X$; $A$ is a
\emph{strong Ditkin algebra} if it is strongly regular and every maximal ideal has a bounded approximate identity; $A$ has \emph{bounded
relative units} if it has bounded relative units at each $x\in X$. For $C\geq 1$, $A$ has \emph{bounded relative units with global bound $C$} if it has bounded relative units with bound $C$ at each $x\in X$.
\end{dfn}
It is standard (see, for example, \cite{FeinsteinTrivJen,FS}) that $A$ is regular if and only every point of $X$ is a point of continuity for $A$.

We may consider the following conditions which a Banach function algebra may satisfy:
\begin{itemize}
 \item [(a)]for all $C>1$, $A$ has bounded relative units with global bound $C$;
\item[(b)]$A$ has bounded relative units;
\item[(c)]$A$ is a strong Ditkin algebra;
\item[(d)]$A$ is a Ditkin algebra;
\item[(e)]$A$ is strongly regular;
\item[(f)]$A$ is regular.
\end{itemize}
For uniform algebras the following relationships exist between these conditions:
$$\textrm{(a)} \Leftrightarrow \textrm{(b)} \Leftrightarrow \textrm{(c)}\Rightarrow \textrm{(d)}\Rightarrow
\textrm{(e)}\Rightarrow \textrm{(f)}.$$
The uniform algebra $C(X)$ satisfies all of these conditions. An example of a non-trivial uniform algebra with bounded relative units is given in
\cite{FeinsteinStronglyRegular}. Also in \cite{FeinsteinStronglyRegular} is a strongly regular uniform
algebra without bounded relative units (so (e) does not imply (c)). For uniform algebras, it is not known whether (e) implies (d), or whether (d) implies (c).
That (f) does not imply (e) is discussed in the following section.

For both the global regularity conditions and those at a point, the situation is rather different for general Banach function algebras. 
This, more general, case is surveyed in \cite{FeinsteinRegCon} and \cite[Section 4.1]{Dales} (see also
\cite{FeinsteinStrDit}).

\subsection{Amenability and regularity conditions together}
Let $A$ be a strong Ditkin algebra on $X$. Then every maximal ideal
$M_x$ has a bounded approximate identity, and so there are no non-zero point derivations at all on $A$.
For the special case of $R(X)$ we have that, if $R(X)$ is a strong Ditkin
algebra, then every $x\in X$ is a peak point and hence, by Bishop's theorem, $R(X)=C(X)$.
In \cite{O'Farrell}, O'Farrell constructs a compact plane set $X$ such that $R(X)$ is regular but has a non-zero
bounded point derivation.
If $A$ is regular and is strongly regular at $x$ then an immediate consequence of \cite[4.1.20(iv)]{Dales} is that
$\overline{J_x}=\overline{M_x^2}=M_x$. Hence, $A$ has no non-zero bounded point derivations at $x$. In particular, O'Farrell's example 
is not strongly regular, and thus (f) does not imply (e) in the previous subsection. A separable, regular uniform algebra which has no non-zero bounded point
derivations, but which is not strongly regular, can be constructed by making a small modification to the construction of the uniform algebra found in the
remark on page 300 of \cite{FeinsteinStronglyRegular}. All that is needed is to ensure that, in the construction on page 299 of \cite{FeinsteinStronglyRegular}, each family ${\mathcal F}_\alpha$ is dense in $I(E_\alpha)$.
It is not known whether it is possible for $R(X)$ to have this combination of properties.

In \cite{Me} the second author (based on the work of the first in \cite{FeinsteinMorris}) proved the following result. (The reader may refer to \cite[2-8]{Browder} for information on the essential set and
essential uniform algebras.)

\begin{prop} \label{thm}Let $Q=\{x+iy\in\C:x,y\in[0,1]\}$. For each $C>0$ there is a compact set $X\subseteq Q$  such that
$\partial Q$ is a subset of $X$, $X\setminus \partial Q$ is dense in $X$, $R(X)$ is regular and has no non-zero, bounded point
derivations and, for all $f, g$ in $R_0(X),$
\begin{displaymath} \bigg\vert\int_{\partial Q}f^\prime(z)g(z)\textrm{d}z\bigg\vert\le C \unorm{f}\unorm{g}\,.
\end{displaymath}
In particular, $R(X)$ is not weakly amenable. Furthermore, the following are true:
\begin{itemize}
 \item $\partial Q$ is contained in the essential set, $E$, of $R(X)$;
\item for $f \in R(X)$, $D(f)$ depends only on $f|_{\partial Q}$;
\item $R(E)=R(X)|_E$;
\item  $E\setminus\partial Q$ is dense in $E$.
\end{itemize}
\end{prop}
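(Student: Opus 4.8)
The plan is to realise $X$ as a Swiss cheese. I would set $X=Q\setminus\bigcup_{n=1}^{\infty}\mathrm{int}\,\Delta_n$, where $(\Delta_n)$ is a sequence of open discs, with centres $z_n$ and radii $r_n$, chosen inductively so that the $\overline{\Delta_n}$ are pairwise disjoint, each $\overline{\Delta_n}\subseteq\mathrm{int}\,Q$, and $\sumi{n}r_n$ is finite and as small as required below. (The finiteness makes the measure $\mu:=\d z|_{\partial Q}-\sumi{n}\d z|_{\partial\Delta_n}$, which annihilates $R(X)$ by the residue theorem, of finite total variation; its smallness is what delivers the constant $C$.) Three further features would be built in one disc at a time, which is possible because only finitely many constraints are active when $\Delta_n$ is chosen: (a) the $z_n$ are dense in $\mathrm{int}\,Q$ and the discs cluster, in a controlled way, near every point of $X$, enough to force $R(X)$ to be regular; (b) the discs near $\partial Q$ are so thin — in the sense of continuous analytic capacity — that $R(X)$ has no non-zero bounded point derivations and no point of $\mathrm{int}\,Q$ close to $\partial Q$ is a peak point for $R(X)$; (c) each $\Delta_n$ is surrounded by an open annular collar $C_n=\{\,z:r_n<\abs{z-z_n}<\lambda r_n\,\}$, for a fixed $\lambda>1$, contained in $\mathrm{int}\,Q$ and disjoint from every $\Delta_m$ with $m\neq n$. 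Features (a) and (b) are the substance of the construction and would follow the ideas of O'Farrell and of the first author \cite{O'Farrell,FeinsteinStronglyRegular,FeinsteinMorris}. As $\bigcup_n\Delta_n$ is dense in $Q$, $X$ has empty interior; as each $\Delta_n$ contains the pole of $1/(z-z_n)\in R_0(X)$, the character space of $R(X)$ is $X$; and as the discs near $\partial Q$ are tiny, $X$ has positive area in every neighbourhood of every point of $\partial Q$, so $X\setminus\partial Q$ is dense in $X$.

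For the integral bound, fix $f,g\in R_0(X)$. Their finitely many poles lie in $\C\setminus X=(\C\setminus Q)\cup\bigcup_n\Delta_n$, and those outside $Q$ do not contribute to $\oint_{\partial Q}$, so the residue theorem gives $\int_{\partial Q}f'g\,\d z=\sum_n\int_{\partial\Delta_n}f'g\,\d z$, a finite sum. Each term $\int_{\partial\Delta_n}f'g\,\d z=2\pi i\,\mathrm{Res}_{\Delta_n}(f'g)$ would be estimated by splitting $f$ and $g$, near $\overline{\Delta_n}$, into their principal parts inside $\Delta_n$ plus remainders analytic on $\overline{\Delta_n}\cup C_n$, using that the residue of a derivative always vanishes, and applying Cauchy's inequalities on the collar $C_n$ to bound the remainders and their derivatives by $\unorm f$ and $\unorm g$. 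The goal is a bound $\bigl\lvert\int_{\partial\Delta_n}f'g\,\d z\bigr\rvert\le K_n\unorm f\,\unorm g$ with $\sum_n K_n\le C$; the separation imposed in (c) together with the smallness of the $r_n$ is precisely what should make the $K_n$ summable, and not merely finite one at a time. Granting this, $\bigl\lvert\int_{\partial Q}f'g\,\d z\bigr\rvert\le C\unorm f\,\unorm g$, so $f\mapsto\bigl(g\mapsto\int_{\partial Q}f'g\,\d z\bigr)$ is a bounded map $R_0(X)\to R_0(X)'$ which extends by continuity to a bounded linear $D\colon R(X)\to R(X)'$. A direct computation on $R_0(X)$, taken to the limit, shows $D$ is a derivation, and taking $f(z)=z$, $g(z)=1/(z-z_1)\in R_0(X)$ gives $D(f)(g)=\int_{\partial Q}\frac{\d z}{z-z_1}=2\pi i\neq 0$; hence $D\neq 0$ and $R(X)$ is not weakly amenable, so in particular $R(X)\neq C(X)$.

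For the four additional claims: if $f,\tilde f\in R_0(X)$ agree on $\partial Q$, then parametrising $\partial Q$ by a smooth loop $\gamma$ we have $\int_{\partial Q}(f-\tilde f)'g\,\d z=\int\frac{\d}{\d t}\bigl[(f-\tilde f)(\gamma(t))\bigr]g(\gamma(t))\,\d t=0$ since $(f-\tilde f)\circ\gamma\equiv 0$; passing to the limit, $D(f)$ depends only on $f|_{\partial Q}$. Since $R(X)\neq C(X)$, Bishop's theorem furnishes a set of positive area in $\mathrm{int}\,Q$ consisting of points that are not peak points for $R(X)$; every such point lies in the essential set $E$ (by the localisation properties of $R(X)$, every point of $X\setminus E$ is a peak point), and by (b) these points accumulate at every point of $\partial Q$, so $\partial Q\subseteq\overline E=E$, and the same points show $E\setminus\partial Q$ is dense in $E$. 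Lastly, the construction can be arranged so that $E$ is again a set of Swiss-cheese type — a subregion of $Q$ with a family of discs deleted — on which the standard localisation argument for $R$ gives $R(X)|_E=R(E)$.

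I expect the main obstacle to be the simultaneous realisation of (a) and (b): arranging a single sequence of discs that is dense and thick enough near every point of $X$ to make $R(X)$ regular, yet thin enough near $\partial Q$ both to kill every bounded point derivation and to push $E$ out to $\partial Q$, while keeping $\sumi{n}r_n$ — and with it the constants $K_n$ — small enough for the quantitative estimate with the prescribed $C$. That balancing act, and the estimate for $\int_{\partial\Delta_n}f'g\,\d z$ on which it rests, is exactly the delicate bookkeeping carried out in \cite{FeinsteinMorris}.
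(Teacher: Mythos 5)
First, a point of comparison: the paper does not actually prove Proposition~\ref{thm} --- it is imported from \cite{Me} (which builds on \cite{FeinsteinMorris}), with the four itemised properties merely read off from that construction. So your outline is really being measured against the Feinstein--Morris/Heath Swiss cheese construction, and at the level of strategy (delete a sequence of discs from $Q$, define the bilinear functional $(f,g)\mapsto\int_{\partial Q}f'g\,\d z$, reduce to per-disc contributions by the residue theorem, use Bishop's theorem and localisation to handle the essential set) you have the right skeleton.

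However, the quantitative core of your outline fails in two concrete ways. First, your conditions (a) and (c) are mutually inconsistent. For $R(X)$ to be regular, $X$ must have empty interior: if $x$ were interior to $X$, a function in $M_x$ equal to $1$ on the compact set $X\setminus B(x,\eps)$ would be analytic and locally constant near $x$, hence identically $1$ there, a contradiction. So the deleted discs must be dense in $Q$, and then every nonempty open annulus $C_n$ meets some $\Delta_m$ with $m\neq n$; equivalently, a collar disjoint from all other discs would be a nonempty open subset of $X$. The permanently free collars on which your Cauchy estimates rest therefore cannot exist. Second, even where such a collar is available, the estimate it yields is useless for summability: the bound $\abs{\int_{\partial\Delta_n}f'g\,\d z}\le K_n\unorm{f}\,\unorm{g}$ obtained from Laurent coefficients on a fixed-ratio annulus $\{r_n<\abs{z-z_n}<\lambda r_n\}$ gives $K_n$ equal to a constant depending only on $\lambda$ (explicitly one gets $4\pi\lambda/(\lambda-1)^2$), \emph{independent of $r_n$}, because the quantity $\int f'g\,\d z$ and the hypotheses are invariant under dilation. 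Making $\sumi{n}r_n$ small therefore does nothing for $\sum_n K_n$, which diverges. To get summable $K_n$ the ratio of ``free room'' to disc radius must tend to infinity summably fast, and since permanently free annuli are incompatible with density of the discs, the estimate has to be run stage-by-stage on finite Swiss cheeses: each new generation of discs is chosen so small relative to the geometry already in place that it perturbs the functional by a prescribed summable amount, with regularity achieved only in the limit. That is precisely the bookkeeping of \cite{FeinsteinMorris} and \cite{Me} which your final paragraph defers to; as set up, your (a)--(c) cannot deliver it.
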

These last four itemised conditions are not stated in \cite{Me} but can be seen from the construction; the first three allow us to
assume that our example is essential. Furthermore, \cite{Me} also contains an example of a separable uniform algebra $B$ on
$Y$, such that $B$ is regular, not weakly amenable and such that every maximal ideal has a bounded approximate identity. We note that an
example of a Banach function
algebra with these properties is much easier to produce; $AC[0,1]$ and $BVC[0,1]$ will again suffice and are, in addition, strong Ditkin
algebras (\cite[Theorems 4.435 and 5.6.8]{Dales}).

\section{Locally constant functions and derivations}
\noindent The set of continuous real-valued functions on a compact space, $X$, which are locally constant on a (varying) dense open subset of $X$ is
discussed  by Bernard and Sidney (\cite{BernardSidney}) and by Sidney (\cite {Sidney}). We shall consider this property in the complex 
case. For a Banach function algebra, $A$, on its character space, $X$, we define the  following set.
\begin{eqnarray*}
 A_{lc}&=&\{f\in A: \textrm { there exists a dense open set }U\subseteq X \\
&&\textrm{ with $f$ locally constant on } U\}.
\end{eqnarray*}

The following results show how the denseness of $A_{lc}$ in $A$ is related to having bounded relative units.
\begin{lemma}\label{loc con1}
Let $A$ be a uniform algebra on $X$ and let $x \in X$. Suppose that $A$ has bounded relative units at $x$.
Let
$U$ be a neighbourhood of $x$, let $f\in A$ and let $\eps>0$. Then there is a function $g\in A$ which is constant on a neighbourhood of $x$, agrees with $f$ on
$X\setminus U$ and has $\unorm{g-f}<\eps$.
\end{lemma}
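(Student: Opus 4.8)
The plan is to write down a suitable $g$ explicitly and then verify its three properties. Since $A$ is unital, for a scalar $\lambda$ we may regard $\lambda$ as the constant function $\lambda 1_A\in A$; the candidate will be
\[
g:=f(x)+h\,(f-f(x)),
\]
where $h\in J_x$ is a bounded relative unit at $x$ chosen so that $h\equiv 1$ off a suitably small neighbourhood of $x$. Because $h\in J_x$, it vanishes on some open neighbourhood $N$ of $x$, and there $g\equiv f(x)$, so $g$ is constant on a neighbourhood of $x$; wherever $h=1$ we have $g=f$; and in general $g-f=(h-1)(f-f(x))$, so the whole problem reduces to arranging that $1-h$ is supported on a set where $f$ is uniformly close to $f(x)$.

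To do this I would first fix a constant $C\geq 1$ witnessing that $A$ has bounded relative units at $x$, and only then choose the neighbourhood. Using continuity of $f$ at $x$, pick an open neighbourhood $V$ of $x$ with $V\subseteq\operatorname{int}U$ and $\abs{f(y)-f(x)}<\eps/(2(C+1))$ for every $y\in V$. Now $E:=X\setminus V$ is a compact subset of $X\setminus\{x\}$, so the hypothesis yields $h\in J_x$ with $\unorm{h}\leq C$ and $h(E)=\{1\}$; in particular $h\equiv 1$ on $X\setminus U$ since $X\setminus U\subseteq E$, whence $g=f$ on $X\setminus U$. It then remains to estimate $\unorm{g-f}$: the function $1-h$ vanishes on $E$ and has norm at most $C+1$, so $g-f=(h-1)(f-f(x))$ vanishes off $V$ and on $V$ is bounded in modulus by $(C+1)\cdot\eps/(2(C+1))=\eps/2$, giving $\unorm{g-f}\leq\eps/2<\eps$.

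The one point needing care — the step I would flag as the main (if minor) obstacle — is this last estimate. Demanding only that $h=1$ off $U$ would bound $\unorm{g-f}$ merely by $(C+1)\sup_{y\in U}\abs{f(y)-f(x)}$, which need not be small when $U$ is large. The remedy, as above, is to shrink $U$ to $V$ using the continuity of $f$ together with the a priori bound $C$ \emph{before} invoking the bounded-relative-units hypothesis, so that the factor $1-h$, which lives in $V$, only ever multiplies small values of $f-f(x)$.
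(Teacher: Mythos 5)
Your proof is correct and follows essentially the same route as the paper's: the same candidate $g=(f-f(x))h+f(x)$, with the neighbourhood shrunk to a $V$ on which $f$ is within $O(\eps/C)$ of $f(x)$ \emph{before} the bounded-relative-units hypothesis is applied to $E=X\setminus V$. The step you flag as needing care is exactly the one the paper handles by its choice of $V$, so there is nothing to add.
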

\begin{proof}
Let $k \geq 1$ be such that $A$ has bounded relative units at $x$ with bound $k$. Let $V$ be a neighbourhood of $x$ such that $V\subseteq U$ and, for each
$y\in V$,
\[\abs{f(y)-f(x)}<\frac{\eps}{k+1}\,.\]
Choose $h\in J_x$ with $\unorm{h} \leq k$ and such that $h(X\setminus V)\subseteq \{1\}$, and set \[g=(f-f(x))h+f(x)\,.\]
Then $g$ has the required properties.
\end{proof}

\begin{theorem}\label{loc con}
 Let $A$ be a uniform algebra on a compact metric space $X$, and suppose that $Y$ is a dense subset of $X$ such that $A$ has bounded
relative units at $x$ for each $x\in Y$. Then $A_{lc}$ is dense in $A$.
\end{theorem}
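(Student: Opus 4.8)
The plan is to produce the approximant $g \in A_{lc}$ by an iterative perturbation: using Lemma~\ref{loc con1} one makes $g$ constant near more and more points of a countable dense set, keeping the perturbations summable and, crucially, arranging that later steps cannot destroy the local constancy obtained earlier. First I would fix $f \in A$ and $\eps > 0$, and, using that $X$ is a compact metric space (hence separable, as is every subspace), choose a countable set $D = \{x_n : n \in \N\} \subseteq Y$ that is dense in $X$. Put $f_0 = f$.

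Next I would construct recursively functions $f_n \in A$ and open balls $W_n \ni x_n$ satisfying: (1) $\unorm{f_n - f_{n-1}} < \eps/2^n$; (2) $f_n$ is constant on $W_n$; (3) $x_n \in W_n$, and for every $m \in \N$ either $x_m \in W_n$ or $x_m \notin \overline{W_n}$; and (4) $f_n = f_{n-1}$ off an open set disjoint from $W_1 \cup \dots \cup W_{n-1}$. At stage $n$, if $x_n$ already lies in some $W_j$ with $j < n$, I set $f_n = f_{n-1}$ and $W_n = W_j$ (all of (1)--(4) then being immediate). Otherwise, property~(3) applied to the earlier balls forces $x_n \notin \overline{W_j}$ for every $j < n$, so I may pick an open $U_n \ni x_n$ with $U_n \cap \overline{W_j} = \emptyset$ for $j < n$; applying Lemma~\ref{loc con1} at the point $x_n \in Y$ with this $U_n$, the function $f_{n-1}$, and tolerance $\eps/2^n$ yields $f_n \in A$ constant on some neighbourhood $N_n$ of $x_n$ and equal to $f_{n-1}$ off $U_n$, which gives (1) and (4). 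Finally I shrink to a ball $W_n = B(x_n, r_n) \subseteq N_n$ whose radius $r_n$ is chosen outside the countable set $\{\, d(x_n,x_m) : m \in \N \,\}$; this last choice is exactly what delivers (3), and (2) holds since $W_n \subseteq N_n$.

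Granting the construction, the conclusion follows quickly. By~(1) the sequence $(f_n)$ is uniformly Cauchy, so it converges to some $g \in A$ with $\unorm{g - f} < \eps$. By~(4), every perturbation $f_m - f_{m-1}$ with $m > n$ vanishes on $W_n$, so $g$ agrees with $f_n$ on $W_n$ and is therefore constant there by~(2). Hence $g$ is locally constant on the open set $W := \bigcup_n W_n$, which contains the dense set $D$ and so is dense; thus $g \in A_{lc}$. As $f$ and $\eps$ were arbitrary, $A_{lc}$ is dense in $A$.

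The main obstacle is precisely the non-interference bookkeeping: after making $f_{n-1}$ constant near $x_1,\dots,x_{n-1}$ I must be able to modify it near $x_n$ using a region that misses all the earlier neighbourhoods, and the danger is that $x_n$ could a priori lie on the boundary of some earlier ball $W_j$, leaving no room for such a region. Property~(3) is the device that removes this danger: by choosing each ball's radius to avoid the (countable) set of distances from its centre to the points of $D$, no point of $D$ ever lands on the boundary $\overline{W_j} \setminus W_j$, so a point $x_n$ that misses every $W_j$ automatically misses every $\overline{W_j}$ and a good modification region $U_n$ exists. Everything else—the geometric-series estimate and the verification that the limit lies in $A_{lc}$—is routine.
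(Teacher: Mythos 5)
Your proof is correct and follows essentially the same strategy as the paper's: iterate Lemma~\ref{loc con1} along a countable dense subset of $Y$ with perturbations of size $\eps/2^n$ supported away from the regions of local constancy already secured, then pass to the uniform limit. The only difference is the non-interference device --- the paper tracks the full open set $U_k$ of points where the current function is locally constant and always modifies at the least-index point of $Y$ lying outside $\overline{U_{k-1}}$ (points that land on the boundary of some $U_k$ are simply never processed, which does not harm density), whereas you choose ball radii avoiding the countable set of distances to the dense sequence; both devices are valid.
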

\begin{proof}
Without loss of generality, $Y$ is at most countable. In the case where $Y$ is finite the result is trivial so we assume that $Y$ is
countably infinite. We enumerate $Y$ as a sequence $\seq{x}{k}$.

Let $f\in A$ and $\eps>0$. We set $g_0=f$ and $U_0=\emptyset$ and
define, inductively, sequences $\seq{g}{k}\subseteq A$ and $\seq{U}{k}\subseteq X$ as follows. Fix $k\in\N$. If
$\overline {U_{k-1}}=X$ then define $g_k=g_{k-1}$. Otherwise, set
$$n_k=\min\left\{j\in\N: x_j\not\in \overline{U_{k-1}}\right\}.$$
By Lemma \ref{loc con1} we may choose a $g_k\in A$ which is constant on a neighbourhood of $x_{n_k}$,
equal to $g_{k-1}$ on $U_{k-1}$ and with $\unorm{g_{k}-g_{k-1}}<2^{-n}\eps$. In either case, we set
$$U_k:=\{x\in X:g_k \textrm{ is constant on a neighbourhood of } x\}.$$
The inductive choice may now proceed.

Clearly, we have $U_1 \subseteq U_2 \subseteq U_3 \subseteq \cdots$, $\bigcup_{k=1}^\infty U_k$ is dense in $X$, and $\seq{g}{k}$ is a Cauchy sequence in $A$.
Set
$g:=\lim_{k\rightarrow\infty} g_k.$
Then $\unorm{g-f}<\eps$.
We show that $g$ is in $A_{lc}$ by showing that it is locally constant on
$\bigcup_{k=1}^\infty U_k$.
Let $x\in U_k$, for some $k\in\N$. Choose an open neighbourhood $V$ of $x$ with $V\subseteq U_k$ and such that
$g_k$ is constant on $V$. Since, for each  $n\ge k$, we have $g_n\vert_{U_k}=g_k\vert_{U_k}$, we see that $g$ is constant on $V$. Thus $g$ is locally constant on $\bigcup_{k=1}^\infty U_k$, and hence
$g \in A_{lc}$.
The result follows.
\end{proof}

Based on the intuitive notion that bounded derivations from a uniform algebra are ``like differentiation'' a  na\"{\i}ve
conjecture concerning $A_{lc}$ is the following: each bounded derivation $D$ from a uniform algebra $A$ on $X$ into a commutative Banach $A$-bimodule annihilates $A_{lc}$.
This is easily seen to be false for Banach function algebras: any Banach function algebra
with character space equal to the one point compactification of $\N$ and which has
a bounded point derivation at infinity will do to give a counterexample. Furthermore, we can see that the conjecture is also false for uniform algebras by considering the well known ``tomato can
algebra'' (see \cite[2-8]{Browder}), consisting of the
continuous functions on a solid cylinder which are analytic on one face. However this is rather an
artificial construction, and is far from essential. Hence the question arises as to whether the conjecture holds for essential uniform algebras. We shall show that the answer to this question is still negative.

We begin by proving some preliminary results. The lemma below is a ``point-by-point'' version
of \cite[Lemma 3.4]{FeinsteinStronglyRegular}.

\begin{Lemma}\label{bru}
 Let $A$ be a uniform algebra on $X$ and $x\in X$. Suppose that, for each compact subset $E$ of $X\setminus\{x\}$,
there exists an open neighbourhood, $U$, of $x$, and $f\in A$ such that
\begin{itemize}
 \item [(i)]$f(U)= \{1\}$,
\item[(ii)]$f(E)\subseteq\{0\}$,
\item[(iii)] For each $k\in \N$ there is a $g\in A$ with $g^{2^k}=f$.
\end{itemize}
Then $A$ has bounded relative units at $x$.
\end{Lemma}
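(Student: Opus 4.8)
The plan is to show directly that $A$ has bounded relative units at $x$ with a fixed bound, say $3$: given an arbitrary compact set $E\subseteq X\setminus\{x\}$, I want to produce $h\in J_x$ with $\unorm{h}\le 3$ and $h(E)\subseteq\{1\}$. First I would apply the hypothesis to $E$ to obtain an open neighbourhood $U$ of $x$ and $f\in A$ satisfying (i)--(iii). The function $1-f$ already lies in $J_x$ (it vanishes on $U$) and takes the value $1$ on $E$; its only defect is that its norm, bounded merely by $1+\unorm{f}$, is not under control. The idea is to use the $2^k$-th roots provided by (iii) to trade $f$ for a function of norm as close to $1$ as we please, while retaining properties (i) and (ii).

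So I would fix $k\in\N$ and choose $g\in A$ with $g^{2^k}=f$, and then record four facts. Since $A$ is a uniform algebra, $\unorm{g}=\unorm{f}^{1/2^k}$, which tends to $1$ as $k\to\infty$ (note $\unorm{f}\ge 1$, since $f\equiv 1$ on $U$). Since $g^{2^k}$ vanishes on $E$, so does $g$. Since $g(x)^{2^k}=f(x)=1$, the scalar $g(x)$ is a $2^k$-th root of unity, of modulus $1$; replacing $g$ by $\overline{g(x)}\,g$, which is still a $2^k$-th root of $f$, we may assume $g(x)=1$. Finally, since $g^{2^k}=f\equiv 1$ on $U$, the function $g$ maps $U$ into the finite set of $2^k$-th roots of unity.

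The crucial step is to check that $1-g\in J_x$, i.e. that $g\equiv 1$ on some neighbourhood of $x$. Here I would use that the $2^k$-th roots of unity other than $1$ form a finite set bounded away from $1$: if $\delta>0$ is less than the distance from $1$ to that set, then $W:=g^{-1}(\{z:\abs{z-1}<\delta\})$ is an open neighbourhood of $x$ (as $g$ is continuous and $g(x)=1$), and on the open neighbourhood $W\cap U$ of $x$ the function $g$ takes values both in the set of $2^k$-th roots of unity and in the $\delta$-ball about $1$, so it is identically $1$ there. Hence $1-g$ vanishes on $W\cap U$, and so $1-g\in J_x$.

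It then remains only to collect the estimates. With $h:=1-g$ we have $h\in J_x$, $h(E)\subseteq\{1\}$, and $\unorm{h}\le 1+\unorm{f}^{1/2^k}$; choosing $k$ large enough that $\unorm{f}^{1/2^k}\le 2$ gives $\unorm{h}\le 3$, and as $E$ was arbitrary, $A$ has bounded relative units at $x$. The step I expect to be the main obstacle is the one in the previous paragraph: the norm reduction is a routine consequence of the identity $\unorm{a^n}=\unorm{a}^n$ in a uniform algebra, but recovering a genuine relative unit --- one that is honestly constant (equal to $0$), not merely small, on a whole neighbourhood of $x$ --- is the delicate point, and it is precisely what forces the hypothesis to ask for $2^k$-th roots (so that the values of $g$ over $U$ lie in a discrete set) rather than merely that $\unorm{f}$ be small.
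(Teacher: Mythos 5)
Your proposal is correct and follows essentially the same route as the paper's proof: take a $2^k$-th root $g$ of $f$ with $k$ large enough that $\unorm{g}\le 2$, normalise so that $g(x)=1$, use the discreteness of the $2^k$-th roots of unity together with continuity to see that $g\equiv 1$ near $x$, and set $h=1-g$. The only difference is that you spell out the details (the identity $\unorm{g}=\unorm{f}^{1/2^k}$ and the $\delta$-ball argument) that the paper leaves implicit.
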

\begin{proof}
Let $E$ be a compact subset of $X\setminus\{x\}$.
Choose an open neighbourhood $U$ of $x$ and $f\in A$ satisfying the above conditions (i) to (iii).
Let $k \in \N$ be such that $\unorm{f}^{2^{-k}}\le 2$, and choose $g\in A$ with $g^{2^k}=f$. Without loss of generality, we may assume that
$g(x)=1$.
Then $\unorm g \le 2$ and $g(E)\subseteq \{0\}$.
We have $f(y)=g^{2^k}(y)=1$, for $y\in U$, and so
$g$ takes only finitely many different values on $U$. Thus, since $g$ is continuous, $g$ must be constantly
equal to $1$ on
some neighbourhood of $x$.
Set $h=1-g$. Then $h \in J_x$, $h(E) \subseteq \{1\}$ and $\unorm{h} \leq 3$. Thus $A$ has bounded relative units at $x$.
\end{proof}
The following lemma is \cite[Lemma 3.5]{FeinsteinStronglyRegular}
\begin{prop}\label{regular}
 Let $A$ be a regular uniform algebra on a compact metric space, $X$. Then there exists a countable set, $\mathcal F\subseteq A$, such 
that, for each closed subset $E$ of $X$ and each $x\in X\setminus E$, there exists an open set, $U$, containing $x$, and $f\in\mathcal F$ with $f(U)=\{1\}$ and
$f(E)\subseteq\{0\}$.
\end{prop}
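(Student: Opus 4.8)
The plan is to leverage the second countability of the compact metric space $X$. Regularity supplies, for each $x\in X$ and each closed $E$ with $x\notin E$, a separating function; the proposition asserts that countably many such functions, of a slightly strengthened ``Urysohn'' type, already suffice for all pairs $(E,x)$ simultaneously. The one input from regularity that I need is its standard reformulation for Banach function algebras (see, e.g., \cite[4.1.20]{Dales}): since $A$ is regular, for every compact $K\subseteq X$ and every open $W$ with $K\subseteq W$ there is $f\in A$ with $f=1$ on $K$ and $f=0$ on $X\setminus W$.

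First I would fix a countable base $\mathcal{B}=\{V_n:n\in\N\}$ for the topology of $X$. Then, for each ordered pair $(m,n)\in\N^2$ with $\overline{V_m}\subseteq V_n$, I would use the reformulation just quoted (with $K=\overline{V_m}$, which is compact, and $W=V_n$) to select one function $f_{m,n}\in A$ with $f_{m,n}=1$ on $\overline{V_m}$ (hence on the open set $V_m$) and $f_{m,n}=0$ on $X\setminus V_n$. Put $\mathcal{F}=\{f_{m,n}:\overline{V_m}\subseteq V_n\}$; this is a countable subset of $A$. To verify it has the required property, let $E\subseteq X$ be closed and $x\in X\setminus E$. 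Since $X$ is metrizable, hence normal, I can sandwich two basic open sets: choose $V_n\in\mathcal{B}$ with $x\in V_n\subseteq X\setminus E$, and then $V_m\in\mathcal{B}$ with $x\in V_m\subseteq\overline{V_m}\subseteq V_n$. Now $(m,n)$ is one of the admissible pairs, so $f_{m,n}\in\mathcal{F}$, and taking $U:=V_m$ we have $f_{m,n}(U)=\{1\}$ while, since $E\subseteq X\setminus V_n$, also $f_{m,n}(E)=\{0\}$, as desired.

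The only step that needs care is the appeal to the Urysohn reformulation of regularity. The bare definition of regularity merely separates a point from a closed set, producing a function equal to $1$ \emph{at} $x$, whereas here I need one that is identically $1$ \emph{on an open neighbourhood} of $x$ and identically $0$ on $E$. The reformulation above delivers exactly this once a compact neighbourhood of $x$ lying inside $X\setminus E$ is fed in as $K$; everything after that is the routine second-countability and normality bookkeeping, so I do not anticipate any genuine obstacle.
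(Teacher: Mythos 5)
Your proof is correct. The paper itself gives no argument for this proposition, simply citing it as Lemma~3.5 of \cite{FeinsteinStronglyRegular}; your argument --- a countable base for $X$, the sandwich $x\in V_m\subseteq\overline{V_m}\subseteq V_n\subseteq X\setminus E$, and the standard fact that a regular Banach function algebra is normal (so one can take $f=1$ on the compact set $\overline{V_m}$ and $f=0$ off $V_n$) --- is the natural and essentially standard proof of that cited lemma, and every step checks out.
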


We are now ready to construct our example. The construction is similar to those of \cite[Theorem 3.1]{FeinsteinMorris} and
\cite[Theorem 3.2]{Me}, and the proof will be a sketch, leaving out some of the details, many of which may be found in those papers (primarily in
\cite{FeinsteinMorris}). For a discussion of the notation relating to  Cole extensions, and of their basic properties, see \cite{Cole}, \cite{Dawson}
and \cite{FeinsteinStronglyRegular}.
\begin{theorem}\label{bru-notwa}
There is a separable, essential, regular uniform algebra $A$ on $X$ and a dense open subset $V$ of $X$ such that every point of $X$ is a peak point for $A$ and $A$ has bounded relative units at every point of $V$, but $A$ is not weakly amenable.
\end{theorem}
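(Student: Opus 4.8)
The plan is to start from the algebra $R(X_0)$ provided by Proposition \ref{thm} (with the constant $C$ fixed, say, $C=1$), so that we have a compact $X_0\subseteq Q$ with $\partial Q\subseteq X_0$, $X_0\setminus\partial Q$ dense in $X_0$, $R(X_0)$ regular with no non-zero bounded point derivations, essential, and carrying the non-zero bounded derivation $D$ with $|D(f)|\le\unorm f\,\unorm g$ that witnesses failure of weak amenability; crucially $D(f)$ depends only on $f|_{\partial Q}$ and $E\setminus\partial Q$ is dense in the essential set $E$. The derivation $D$ lifts through Cole root extensions (this is exactly how \cite{Me} and \cite{FeinsteinMorris} proceed), so the strategy is to build a Cole extension tower over $R(X_0)$, adding square roots, that forces two things simultaneously: (a) every point of the limit character space becomes a peak point, and (b) at a dense \emph{open} set of points we obtain the hypotheses (i)--(iii) of Lemma \ref{bru}, so that Lemma \ref{bru} gives bounded relative units there. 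The limit algebra $A$ on $X$ will still carry a lifted non-zero bounded derivation into $A'$ (or into a suitable dual module), hence $A$ is not weakly amenable, while separability and essentialness are preserved by a countable tower of Cole extensions over a separable essential base.

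First I would use Proposition \ref{regular} applied to the regular algebra $R(X_0)$ on the compact metric space $X_0$ to extract a countable family $\mathcal F_0\subseteq R(X_0)$ of "Urysohn-type" functions: for each closed $E\subseteq X_0$ and each $x\notin E$ there is $f\in\mathcal F_0$ and an open $U\ni x$ with $f(U)=\{1\}$, $f(E)\subseteq\{0\}$. I would then run a transfinite-free, purely countable induction interleaving two tasks. At odd stages, as in \cite{Cole} and \cite{FeinsteinStronglyRegular}, adjoin square roots of a carefully chosen countable dense set of functions so that in the limit every point is a peak point (this is the standard Cole peak-point construction). At even stages, for each $f$ in the (countably many) functions of the families $\mathcal F_\alpha$ coming from Proposition \ref{regular} at each level, adjoin, for every $k\in\N$, successive $2^k$-th roots of $f$; the set $V$ will be the union over the tower of the (open) sets $U$ on which such an $f$ is identically $1$, pulled back to $X$. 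By construction, at every point of $V$ and every compact $E$ avoiding that point, one of these $f$'s with all its $2^k$-th roots present witnesses the hypotheses of Lemma \ref{bru}, so $A$ has bounded relative units on $V$; arranging the $\mathcal F_\alpha$ to be dense, exactly as in the "remark on page 300" trick quoted earlier in the excerpt, makes $V$ dense. Regularity of $A$ follows since regularity is inherited by Cole extensions (the base is regular and the construction only adds roots), and by \cite{FeinsteinTrivJen,FS} it suffices that every point of $X$ be a point of continuity, which peak points certainly are.

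The two steps I expect to be the real obstacles are: (1) verifying that the non-zero bounded derivation survives all the way to the limit, i.e.\ that the lifted functionals $\widetilde D$ remain non-zero and bounded on $A$ — here one uses that $D(f)$ depends only on $f|_{\partial Q}$ and that $\partial Q$ (with its copy inside $E$, which stays non-trivial because $E\setminus\partial Q$ is dense in $E$) is not collapsed by the extension, so the essential set of $A$ still carries the derivation, giving non-weak-amenability of $A$; this is the heart of \cite{Me} and I would simply cite the construction there and check nothing new breaks. (2) Bookkeeping the two interleaved inductions so that \emph{both} the peak-point requirement and the Lemma \ref{bru} requirement are met cofinally while keeping the tower countable (hence $A$ separable and, over an essential base with the three itemised properties of Proposition \ref{thm}, essential). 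The rest — $V$ open and dense, bounded relative units on $V$ via Lemma \ref{bru}, regularity — is then routine, and I would present the argument as a sketch, referring to \cite{Cole}, \cite{Dawson}, \cite{FeinsteinStronglyRegular}, \cite{FeinsteinMorris} and \cite{Me} for the standard Cole-extension details.
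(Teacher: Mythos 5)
Your overall strategy is the paper's: pass to the essential set of the Proposition \ref{thm} example, build a countable tower of Cole square-root extensions, use Proposition \ref{regular} to supply Urysohn-type functions whose iterated square roots feed Lemma \ref{bru}, and cite \cite{FeinsteinMorris} and \cite{Me} for regularity, peak points, metrizability and the survival of the derivation. However, there is one genuine gap in your design. You propose to interleave a ``standard Cole peak-point construction'' (adjoining square roots of a dense set of functions) with the adjunction of $2^k$-th roots of the Urysohn functions for \emph{all} points, and you leave the two tasks unconstrained relative to $\partial Q$. The derivation of Proposition \ref{thm} depends only on $f|_{\partial Q}$, and the reason it survives the tower in \cite{FeinsteinMorris} and \cite{Me} is precisely that every function to which a root is attached vanishes on $F_\alpha=\Pi_{0,\alpha}^{-1}(\partial Q)$, so that the restriction algebra on the copy of $\partial Q$ never grows. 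If your odd stages adjoin roots of a dense subset of the whole algebra, or if your even stages adjoin roots of Urysohn functions associated with points of $F_\alpha$ (such functions are identically $1$ on a neighbourhood of those points, hence do not vanish on $F_\alpha$), the bounded bilinear functional need not extend and the non-weak-amenability claim collapses. Deferring this to ``check nothing new breaks'' does not close the gap, because your construction, as described, is exactly the kind of modification that does break it.

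The paper resolves this with a single constraint that does all the work at once: each $\mathcal F_\alpha$ is a countable \emph{dense subset of the ideal of functions vanishing on $F_\alpha$}, enlarged (using Proposition \ref{regular}) to contain Urysohn functions only for points $x\in X_\alpha\setminus F_\alpha$ (which is possible because for such $x$ one may absorb $F_\alpha$ into the compact set $E$, so the Urysohn function still lies in the ideal). Density in that ideal already yields, in the limit, regularity, peak points at every point of $X$ (including points of $F_\omega$), and the survival of the derivation — no separate ``peak-point stage'' is needed. This also dictates the correct choice $V=X\setminus F_\omega$: bounded relative units are only obtained off the preimage of $\partial Q$, which is why $V$ is a dense open proper subset rather than the union of pulled-back sets $U$ that you describe. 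Your proposal would be correct once you impose the vanishing-on-$F_\alpha$ constraint uniformly on every function receiving a root and restrict the Lemma \ref{bru} argument to points of $X\setminus F_\omega$.
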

\begin{proof} Let $X_0$ be the compact plane set called $E$ in Proposition \ref{thm} (so $X_0$ is
the essential set of the uniform algebra considered in that proposition). Set $A_0=R(X_0)$, so that $A_0$ is essential. Let $\omega$ be the first
infinite ordinal. We construct, inductively, a countable system of root extensions
\[\left((X_\alpha;\Pi_{\alpha, \beta}),(A_\alpha)\right), \quad(0\le\alpha\le\beta\le\omega)\,,\] so $X_\omega$ is the inverse limit
of $\left(X_\alpha;\Pi_{\alpha, \beta}\right)~(0\le\alpha\le\beta <\omega)$, and $A_\omega$ is the direct limit of
$\left(A_\alpha;\Pi^*_{\alpha, \beta}\right)~(0\le\alpha\le\beta <\omega)$. We shall ensure that $A_\omega$ will be a
uniform algebra on $X_\omega$ with the required properties.

At stage $\alpha<\omega$, let $F_\alpha:= \Pi_{0,\alpha}^{-1}( \partial Q)$. Proposition \ref{regular} lets us define
$A_{\alpha+1}$ by attaching square roots to a countable subset $\mathcal F_\alpha$ of $A_\alpha$ with the following properties:
\begin{enumerate}
 \item $\mathcal F_\alpha$ is a dense subset of the ideal of functions in $A_\alpha$ which vanish on
$F_\alpha$.
\item For each $x\in X_\alpha\setminus F_\alpha$ and each compact subset $E$ of $X_\alpha\setminus \{x\}$ there exists an open subset $U$ of $X_{\alpha}$ containing $x$, and $f \in \mathcal F_\alpha$ with $f(U)=\{1\}$ and
$f(E)\subseteq\{0\}$.
\end{enumerate}
We may also ensure at each stage that every element of $\Pi_{\alpha, \alpha+1}^*\mathcal F_\alpha$ has a square root in $\mathcal F_{\alpha+1}$.

Set $A=A_\omega$ and $X=X_\omega$ and define $F_\omega := \Pi_{0,\omega}^{-1}( \partial Q)$.
As in the proofs of \cite[3.1]{FeinsteinMorris} and \cite[3.2]{Me}, condition (1), above, ensures
that $A$ is a regular uniform algebra with character space $X$, $X$ is metrizable, $A$ is not weakly amenable, and every $x\in X$ is a 
peak point for $A$. Let $V:=X\setminus F_\omega$. It is easily seen that $V$ is a dense open subset of $X$. We shall show that,
for each $x\in V$, $A$ has bounded relative units at $x$ by showing that, at each such $x$, the conditions of Lemma \ref{bru} hold. Set 
$$\mathcal F=\bigcup \left\{\Pi_{\alpha, \omega}^*(\mathcal F_a):\alpha<\omega\right\}.$$
Each element of $\mathcal F$ has a square root in $\mathcal F$ and, therefore, also has a $2^k$th root in $\mathcal F$, for all
$k\in \N$. Let $x\in V$ and let $E$ be a compact subset of $X\setminus \{x\}$. Since $X$ is the inverse limit of the inverse system
$(X_\alpha;\Pi_{\alpha, \beta}),\,(\alpha\le\beta<\omega)$, and $x\in X\setminus E$, there is $\alpha<\omega$ with
$\Pi_{\alpha, \omega}(x)\in X_\alpha\setminus\Pi_{\alpha, \omega}(E)$. There exists an open subset, $W$, of $X_\alpha$, containing
$\Pi_{\alpha, \omega}(x)$ and $g\in\mathcal F_\alpha$ with $g(\Pi_{\alpha, \omega}(E))\subseteq\{0\}$ and $g(W)=\{1\}$. Set
$f=\Pi_{\alpha, \omega}^*g\in\mathcal F$ and set $U=\Pi_{\alpha, \omega}^{-1}(W)$. We have that $f$ and $U$ satisfy the conditions of
Lemma \ref{bru}, as required.

Finally, it may be shown that, since $A_0$ is essential, so is $A$ and the result follows.
\end{proof}

Note that, by Lemma \ref{loc con}, $A_{lc}$ is dense in $A$. Since $A$ is not weakly amenable, there is a bounded derivation from $A$ to $A'$
which does not annihilate $A_{lc}$.

\section{Open Questions}

\subsection{Questions for uniform algebras}
The following questions relate to arbitrary uniform algebras.

\begin{question}
Is there a non-trivial, weakly amenable uniform algebra?
\end{question}
\begin{question}
Is there a non-trivial, approximately amenable uniform algebra?
\end{question}
\begin{question}
Is every weakly amenable uniform algebra approximately amenable?
\end{question}
\begin{question}
Is there a uniform algebra which has bounded relative units but which is not weakly amenable?
\end{question}
\begin{question}
Is there a uniform algebra which is strongly regular but which is not weakly amenable?
\end{question}
\begin{question}
More generally, what implications are there between the various regularity and amenability conditions we have discussed for uniform algebras?
\end{question}

\subsection{Questions for $R(X)$}
The following questions relate to the special case of $R(X)$.

\begin{question}
Is there a compact plane set such that $R(X)$ is non-trivial and strongly regular?
\end{question}

\begin{question}
Is there a compact plane set $X\subseteq\C$ such that $R(X)$ is regular and has no non-zero bounded point derivations, but is
not strongly regular?
In particular, is the uniform algebra $R(X)$ from Proposition \ref{thm} (and \cite{Me}) strongly regular?
\end{question}

\begin{question}
What are the answers to the questions for general uniform algebras in the case of $R(X)$?
\end{question}

\subsection{Questions on the interval}
The following questions relate to uniform algebras on the compact interval $[0,1]$.

\begin{question}
Is there a non-trivial uniform algebra with character space equal to $[0,1]$? (This is a famous open problem of Gelfand.)
\end{question}
\begin{question}
Is $C([0,1])$ the only regular uniform algebra on $[0,1]$? (For some partial results on this, see
\cite{FS-strong}).
\end{question}
\begin{question}
What are the answers to the questions for general uniform algebras in the case of uniform algebras on $[0,1]$?
\end{question}

\providecommand{\bysame}{\leavevmode\hbox to3em{\hrulefill}\thinspace}
\providecommand{\MR}{\relax\ifhmode\unskip\space\fi MR }
\providecommand{\MRhref}[2]{%
  \href{http://www.ams.org/mathscinet-getitem?mr=#1}{#2}
}
\providecommand{\href}[2]{#2}

\end{document}